\newtheorem{thm}{Theorem}[]
\newtheorem*{thm*}{Theorem}
\newtheorem{ex}[thm]{Example}
\newcommand{\param}{{\mathchoice{\mkern1mu\mbox{\raise2.2pt\hbox{$
\centerdot$}}
\mkern1mu}{\mkern1mu\mbox{\raise2.2pt\hbox{$\centerdot$}}\mkern1mu}{
\mkern1.5mu\centerdot\mkern1.5mu}{\mkern1.5mu\centerdot\mkern1.5mu}}}
\begin{document}
\title{Existence and deformability of topological Morse functions}
\author{Ingrid Irmer}
\address{SUSTech International Center for Mathematics\\
Southern University of Science and Technology\\Shenzhen, China
}
\address{Department of Mathematics\\
Southern University of Science and Technology\\Shenzhen, China
}
\email{ingridmary@sustech.edu.cn}

\today

\begin{abstract}
In the 1950s Morse defined the analogue of Morse functions for topological manifolds. In many instances, when mathematicians are using techniques on topological manifolds that appear to be Morse-theoretic in nature, there is a topological Morse function implicit in the argument. Topological Morse functions are known to inherit most of the familiar properties of the usual (smooth) Morse functions, with two crucial exceptions: existence and deformability. This paper gives a simple construction of continuous families of topological Morse functions.
\end{abstract}

\maketitle

%{\footnotesize
%\tableofcontents
%}

\section{Introduction and history of the problem}
\label{intro}
In \cite{Morse}, Morse functions were defined for topological manifolds. A definition is given in Section \ref{secdefns}. When a topological Morse function exists, it can be used in most of the same ways as a smooth Morse function for studying the topology of a manifold. Unfortunately, constructing topological Morse functions or even proving their existence is more difficult than in the case of smooth functions on compact manifolds, as topological Morse functions are not known to be generic, and it is an open question from \cite{Morse} whether there exists a topological Morse function on every topological manifold. The contribution of this paper is to give a construction of continuous families of topological Morse functions. All the examples in the literature of which the author is aware take this form, and it seems likely that all topological Morse functions can be obtained in this way.\\

A completely different approach, using dynamical systems, was adopted in \cite{Dynamical} to derive sufficient conditions for the existence of a Morse function on a topological manifold.\\

The key idea is to generalise the concept of locally (strictly) convex functions, defined in Section \ref{secdefns}, to topological manifolds, and to use these in the definition of ``Min-type functions'' from \cite{Hyam}. Convexity greatly restricts the type of singularities that can occur. The starting point is the following:

\begin{thm}
\label{thm1}
Let $\mathcal{F}=\{f_{i}\ |\ i\in \mathbb{N}\}$ be a set of convex functions on a Riemannian manifold $M$, and suppose $f:M\rightarrow \mathbb{R}$ is given by $x\mapsto \min\{f_{i}(x)\ |\ f_{i}\in \mathcal{F}\}$. Suppose also that at every $x\in M$, there is a neighbourhood $N_{x}$ of $x$, on which $f$ is the minimum of a finite subset $\mathcal{F}_{x}$ of $\mathcal{F}$, i.e. $f$ is the minimum of a locally finite set of functions. Then $f$ is a topological Morse function.
\end{thm}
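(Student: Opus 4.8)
The plan is to verify the definition of a topological Morse function from Section \ref{secdefns} pointwise: for each $p \in M$ I must produce a topological chart around $p$ in which $f$ takes either the form of a coordinate projection (an ordinary point) or the standard quadratic form $c - x_1^2 - \cdots - x_k^2 + x_{k+1}^2 + \cdots + x_n^2$, with $n = \dim M$ (a critical point of index $k$). The first step is purely local bookkeeping: by the local finiteness hypothesis there is a neighbourhood of $p$ on which $f = \min\{f_1, \dots, f_m\}$ for a finite subset of $\mathcal{F}$, and since any $f_i$ with $f_i(p) > f(p)$ stays strictly above $f$ near $p$ by continuity, I may discard it and assume every $f_i$ is \emph{active}, i.e. $f_i(p) = f(p) =: c$. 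Passing to geodesic normal coordinates I identify a neighbourhood of $p$ with a ball in $\mathbb{R}^n$ and treat the $f_i$ as convex functions there.

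The local model is governed entirely by the gradient vectors $v_i := \nabla f_i(p)$. I would first dispose of the regular case using a separation argument (Gordan's theorem): either there is a direction $u$ with $\langle v_i, u\rangle > 0$ for all $i$, or $0 \in \mathrm{conv}\{v_1,\dots,v_m\}$. In the former case every active $f_i$, hence $f$ itself, is strictly increasing along $u$ and strictly decreasing along $-u$; the identity $\{f \le t\} = \bigcup_i \{f_i \le t\}$ together with the convexity of the sublevel sets lets me foliate the neighbourhood by the level hypersurfaces of $f$ and straighten them, producing a chart in which $f$ is a single coordinate. This is the ordinary-point case, and it subsumes the situation where a single $f_i$ is active at a regular point. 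The remaining possibility that some $v_i = 0$ signals that $p$ is the minimum of a strictly convex $f_i$; there $f = f_i$ locally and the normal form $c + x_1^2 + \cdots + x_n^2$ for a strict convex minimum gives a critical point of index $0$.

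It remains to treat a crease point, where $0 \in \mathrm{conv}\{v_i\}$ and no $v_i$ vanishes. Here I would split $\mathbb{R}^n = W \oplus W^\perp$ with $W = \mathrm{span}\{v_i\}$ of dimension $k$. Because $0$ lies in the relative interior of the convex hull of the $v_i$, for every nonzero $w \in W$ some $\langle v_i, w \rangle < 0$ and, applying this to both $w$ and $-w$, moving away from $p$ inside $W$ in any direction decreases some $f_i$ and hence decreases $f$: the $W$-directions are descending, contributing $-x_1^2 - \cdots - x_k^2$. Transverse to $W$ all gradients vanish, so each restriction $f_i|_{W^\perp}$ has a critical point at $p$; strict convexity makes this a strict minimum, whence $f$ is strictly increasing in the $W^\perp$-directions, contributing $+x_{k+1}^2 + \cdots + x_n^2$. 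This identifies the index as $k = \dim W$ and, crucially, the strictness in the transverse directions forces the critical point to be isolated, the feature that fails for the non-strictly convex function $\min(x,-x)$ on $\mathbb{R}^2$. To turn this qualitative picture into an actual homeomorphism I would build the chart conically: use the nested convex sublevel sets to parametrise level sets of $f$ by rays from $p$, and then reparametrise each ray by a map of the form $t \mapsto \mathrm{sign}(t)\,t^2$ to convert the piecewise-linear leading behaviour of a minimum of affine functions into the quadratic normal form, exactly as $-|x|$ is carried to $-x^2$.

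The main obstacle is this last homeomorphism construction together with the accompanying nondegeneracy analysis. Two points require genuine care. First, the $f_i$ are only convex rather than affine, so their level sets are curved; I must show that the convex geometry, namely nestedness and strict convexity of the sublevel sets, still lets me straighten them compatibly across the strata defined by which subsets of the $f_i$ attain the minimum, and that the resulting radial map is a homeomorphism and not merely a continuous bijection. Second, I must rule out the non-generic configuration $0 \in \partial\,\mathrm{conv}\{v_i\}$, which would produce a degenerate critical point and violate the Morse condition; this is precisely where strict convexity and a general-position property of the family $\mathcal{F}$ enter, and making the dichotomy clean, regular versus nondegenerate critical with nothing in between, is the technical heart of the argument.
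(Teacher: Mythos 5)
Your overall architecture (reduce to the active functions, pass to normal coordinates, classify the point by the configuration of the gradients $v_i=\nabla f_i(p)$) parallels the paper's proof, and your analysis of the genuine crease case --- where $0$ lies in the \emph{relative interior} of $\mathrm{conv}\{v_i\}$ inside $W=\mathrm{span}\{v_i\}$, giving descent in all of $W$ and strict increase in $W^{\perp}$, hence index $k=\dim W$ --- matches the paper's criterion for a critical point of index $r$. But there is a genuine gap at exactly the point you flag at the end. Your Gordan dichotomy is not the right dichotomy: $0\in\mathrm{conv}\{v_i\}$ does \emph{not} imply that $p$ is critical, and the configuration $0\in\bdy\,\mathrm{conv}\{v_i\}$ cannot be ``ruled out'' by ``a general-position property of the family $\mathcal{F}$,'' because the theorem has no genericity hypothesis whatsoever. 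The configuration really occurs: take $f_1=x+x^2+y^2$, $f_2=-x+x^2+y^2$, $f_3=y+x^2+y^2$ on $\mathbb{R}^2$, with gradients $e_1,-e_1,e_2$ at the origin. Here $0\in\mathrm{conv}\{v_i\}$ but in the direction $e_2$ no $f_i$ decreases to first order, so the origin is not a local maximum and your index computation does not apply; the conclusion of the theorem forces such a point to be \emph{regular}, and indeed it is: the level set of $\min\{f_1,f_2,f_3\}$ through the origin consists of two arcs meeting tangentially at the origin, a topological line. (Strict convexity is essential here --- for $\min\{-|x|,y\}$ the level set through the origin is a closed ray and the function is not topologically Morse.) This boundary case is the technical heart of the paper's proof, not an excludable degeneracy: the paper's criterion for criticality of index $r$ requires, beyond the gradients spanning an $r$-dimensional $V$, that \emph{every} vector of $V$ be a direction of first-order decrease of some $f_i$ (equivalently $0\in\mathrm{relint}\,\mathrm{conv}\{v_i\}$), and when that fails the paper shows the point is regular by a convexity argument: the intersection of the sublevel sets is locally convex, its tangent cone at $x$ is a cone, hence the set of increase of $f$ at $x$ is a cone (possibly with empty interior, the increase being only second order), and the level set through $x$ is locally the boundary of a cone and therefore a topological hyperplane.

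A second, related error: your claim that if some $v_i=0$ then ``$f=f_i$ locally'' and $p$ is an index-$0$ critical point is false whenever more than one function is active. With $f_1=x^2+y^2$ and $f_2=x+x^2+y^2$ both active at the origin, $f=\min\{f_1,f_2\}$ agrees with neither function on any neighbourhood, and the origin is not a local minimum of $f$ (it is again a regular point of the boundary type above, since $0=v_1$ is a non-interior point of $\mathrm{conv}\{0,e_1\}$). So the vanishing-gradient case gives index $0$ only when it is the sole active function; otherwise it must be funneled into the same regular-point analysis that your proposal is missing. Your concluding radial-chart construction is a reasonable route to the required homeomorphisms (the paper itself leans on the cone structure of level sets rather than writing explicit charts), but as proposed the proof cannot close without replacing the ``rule it out by genericity'' step with an actual proof of regularity in the boundary configuration.
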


Convexity is not defined on topological manifolds. However, Morse functions are defined in terms of local properties. In the case of topological Morse functions, these local properties are preserved by local homeomorphisms. The only properties of convex functions needed in the proof of Theorem \ref{thm1} are satisfied by functions that are locally convex in the image of a chart. This leads directly to the following stronger statement:

\begin{thm}
\label{thm2}
Let $\mathcal{F}=\{f_{i}\ |\ i\in \mathbb{N}\}$ be a set of functions on an $n$-dimensional topological manifold $M$. Suppose that for every $x\in M$, $f:M\rightarrow \mathbf{R}$ is given by $x\mapsto \min\{f_{i}(x)\ |\ f_{i}\in \mathcal{F}\}$. Suppose also that for every $x\in M$, there is a neighbourhood $N_{x}$ of $x$, on which $f$ can be evaluated as the minimum of a finite subset $\mathcal{F}_{x}$ of $\mathcal{F}$, where $N_{x}$ is contained in the domain of a chart in which the elements of $\mathcal{F}_{x}$ map to convex functions on an open set of $\mathbb{R}^{n}$. Then $f$ is a topological Morse function.
\end{thm}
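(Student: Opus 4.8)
The plan is to reduce Theorem~\ref{thm2} to Theorem~\ref{thm1} by transporting the local structure through charts. The crucial observation, stated in the passage between the two theorems, is that the property of being a topological Morse function is a \emph{local} property, and that local homeomorphisms preserve exactly those local properties. So I would first isolate precisely which conclusions of Theorem~\ref{thm1} are local in nature: the classification of each point as either regular or as a topological saddle of some index, which is certified by the existence of a chart around the point in which $f$ takes one of Morse's normal forms. Since this certification is phrased entirely in terms of the existence of a suitable homeomorphism near the point, it is invariant under pre-composition with a homeomorphism.

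The main work is a covering-and-pullback argument. Fix an arbitrary $x \in M$. By hypothesis there is a neighbourhood $N_x$ of $x$ and a chart $\varphi \from U \to \varphi(U) \subseteq \mathbb{R}^n$ with $N_x \subseteq U$, such that on $N_x$ we have $f = \min\{f_i \mid f_i \in \mathcal{F}_x\}$ for a finite set $\mathcal{F}_x$, and each $f_i \circ \varphi^{-1}$ is convex on the open set $\varphi(N_x) \subseteq \mathbb{R}^n$. I would then apply Theorem~\ref{thm1} on the open Riemannian manifold $\varphi(N_x)$ (equipped, say, with the flat Euclidean metric, which suffices since convexity there is the genuine Euclidean notion) to the finite family $\{f_i \circ \varphi^{-1} \mid f_i \in \mathcal{F}_x\}$. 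A finite family is trivially locally finite, so the hypotheses of Theorem~\ref{thm1} are met, and it follows that $f \circ \varphi^{-1} = \min_i (f_i \circ \varphi^{-1})$ is a topological Morse function on $\varphi(N_x)$. In particular, the image point $\varphi(x)$ is either a regular point or a nondegenerate topological critical point of $f \circ \varphi^{-1}$, with a local normal-form chart witnessing this.

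Next I would pull this local model back through $\varphi$. Since $\varphi$ is a homeomorphism onto its image, composing the normal-form chart at $\varphi(x)$ with $\varphi$ yields a chart around $x \in M$ in which $f$ itself takes the same normal form. Thus $x$ is a regular point or a topological critical point of $f$ of the appropriate index. Because $x$ was arbitrary, every point of $M$ has the requisite local structure, which is exactly the definition of $f$ being a topological Morse function. I would also remark that $f$ is continuous globally: continuity is itself local, and on each $N_x$ it is the minimum of finitely many continuous functions.

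The step I expect to be the main obstacle is verifying that the conclusion of Theorem~\ref{thm1} is genuinely coordinate-free in the sense required, i.e.\ that ``topological Morse function'' is preserved under the chart homeomorphism $\varphi$ rather than merely under diffeomorphisms. The concern is that Theorem~\ref{thm1} is proved using the Riemannian metric and the honest (metric) convexity of the $f_i$, so one must check that none of the \emph{conclusions} secretly depend on the smooth or metric structure—only the \emph{hypotheses} do. Concretely, I would need to confirm that the local normal form produced by Theorem~\ref{thm1} is characterised purely topologically (a homeomorphism to a standard saddle or a submersion-type model), so that its pullback under the non-smooth map $\varphi$ is still a valid topological normal form. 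If Morse's definition of a topological critical point is indeed of this purely topological type—as the remark preceding the theorem asserts—then the pullback is immediate and the reduction goes through cleanly; the whole difficulty is concentrated in making that assertion precise and confirming it against the definition in Section~\ref{secdefns}.
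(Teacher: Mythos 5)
Your proposal is correct and takes essentially the same approach as the paper: the paper's proof likewise reduces Theorem \ref{thm2} to Theorem \ref{thm1} by observing that the definition of a topological Morse function only requires a homeomorphic parametrisation inside a chart, and that the argument for Theorem \ref{thm1} uses nothing beyond convexity of the functions in that chart. The only cosmetic difference is that you invoke Theorem \ref{thm1} as a black box on $\varphi(N_x)$ with the Euclidean metric and pull the normal form back through the chart homeomorphism, whereas the paper simply notes that the proof of Theorem \ref{thm1} runs verbatim in the chart; both reductions rest on exactly the homeomorphism-invariance of the regular/critical point definitions that you identify as the crux.
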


Whenever $f$ is a Morse function, so is $-f$. Theorems \ref{thm1} and \ref{thm2} both hold when the word ``convex'' is replaced by the word ``concave'' and ``minimum'' is replaced by ``maximum''.\\

One reason Theorem \ref{thm2} is much stronger than Theorem \ref{thm1} is that a convex function can have at most one critical point, so convex functions on manifolds with interesting topology do not exist. Theorem 1 is implicit in \cite{MorseSmale} and \cite{SchmutzMorse}, in which an equivariant topological Morse function on a contractible covering space of an orbifold is obtained. \\

Continuous families of topological Morse functions are obtained by observing that a convex function or function that is locally convex in the image of a chart retains its convexity properties when multiplied by a positive constant.\\

Starting perhaps with \cite{G1} and \cite{GS}, one of the major motivations for defining and working with topological Morse functions has been in the study of distance functions on Riemannian manifolds and applications to sphere packings, for example  \cite{Spheres}, \cite{Cheeger}, \cite{Gr} and \cite{Katz}. In this context, a piecewise smooth function is obtained as the distance from some set. Generalisations of the Morse-Sard theorem of such functions are discussed in \cite{IT} and \cite{LR}. Functions of this form are all variants on the idea of Min-type functions studied in \cite{Hyam}. These ideas were generalised to Alexandrov spaces, for example \cite{BGP} and \cite{P2}. Another context in which topological Morse functions described as Min-type functions arise in the literature is the systole function used to study the topology of moduli space, for example \cite{SchmutzMorse} and \cite{MorseSmale}. Some other representative applications of topological Morse functions can be found in \cite{Cech}, \cite{EK}, \cite{Kuiper} and \cite{Saeki}.

%%%%%%%%%%%%%%%%%%%%%%%%%
\section{Definitions and Examples}
\label{secdefns}

Standard references for topological Morse functions are \cite{Cantwell}, \cite{Siebenmann} and \cite{Morse}.\\

Let $M$ be an $n$-dimensional topological manifold. A \textit{topological Morse function} is a continuous function $f:M\rightarrow \mathbb{R}_{+}$ if the points of $M$ are either regular or critical with respect to $f$. A point $x \in M$ is defined to be regular if there is an open neighbourhood $U$ containing $x$ such that $U$ admits a homeomorphic parametrisation by $n$ parameters, one of which is $f$. If a point $p$ is a critical point there exists a $j\in \mathbb{Z}$, $0\leq j\leq n$, called the \textit{index} of the critical point, and a homeomorphic parametrisation of $U$ by parameters $\{x_{1}, \ldots, x_{n}\}$ such that for every $x$ in $U$, 
\begin{equation*}
f(x)\,-\,f(p) \;=\ \sum_{i=1}^{i=n-j}x_{i}^{2}\,-\,\sum_{i=n-j+1}^{i=n}x_{i}^{2}
\end{equation*}
 
In this paper, a \textit{convex function} $f_{i}$ on a Riemannian manifold is a smooth function for which the sublevel sets of height $t\in \mathbb{R}$, namely $\{x\in \mathbb{M}\ |\ f_{i}\leq t\}$, are strictly convex for all $t\in \mathbb{R}$. Elsewhere in the literature, for example \cite{Convex}, this is also called a smooth, strictly convex function. Equivalently, a smooth function will be called convex if the Hessian is positive definite at every point. The level sets of a convex function locally look like $n-1$ dimensional spheres in $\mathbb{R}^{n}$. A smooth function $f$ on a Riemannain manifold is concave if $-f$ is convex.
 
\begin{ex}[A topological Morse function on $\mathbb{R}^{2}$]
Define $f_{1}$ to be the Euclidean distance in $\mathbb{R}^{2}$ from the point (-1,0) and $f_{2}$ to be the distance in $\mathbb{R}^{2}$ from the point (1,0). The function $f:\mathbb{R}^{2}\rightarrow \mathbb{R}$ given by $\min\{f_{1}, f_{2}\}$ is a topological Morse function. It has two critical points of index 0; these are (-1,0) and (1,0), and a critical point of index 1 at (0,0).
\end{ex}

In \cite{Hyam}, a function $f$ on a manifold $M$ (smooth, Riemannian or topological) was called a Min-type function if there exist finitely many functions $\{f_{1}, \ldots, f_{k}\}$ on $M$ such that at every point $x$ of $M$, $f(x)$ is given by $\min\{f_{1}(x), \ldots, f_{k}(x)\}$. In this paper a slight generalisation will be made, allowing the set of functions $\mathcal{F}=\{f_{i}\}$ to be countably infinite, as long as every point has a neighbourhood on which the minimum only needs to be evaluated over finitely many elements of the set. For example, suppose $X$ is the integer lattice in $\mathbb{R}^{2}$. At a point $x\in \mathbb{R}^{2}$, there are only finitely many distance functions from points in $X$ that realise the minimum at $x$. One only needs to consider finitely many of these when calculating the minimum.

\begin{ex}[A nonexample of a topological Morse function on $\mathbb{R}^{3}$]
\label{localfinitenessex}
Let $f$ be the Euclidean distance from the $x,y$ and $z$ coordinate axes in $\mathbb{R}^{3}$. Then $f$ is not a topological Morse function.
\end{ex}

Example \ref{localfinitenessex} demonstrates that the local finiteness condition in Theorems \ref{thm1} and \ref{thm2} is needed. Example \ref{convexityex} demonstrates the need for the convexity assumption in Theorem \ref{thm1}.

\begin{ex}[A nonexample on $S^{2}$]
\label{convexityex}
Let $p_{1}$ and $p_{2}$ be the two poles of the manifold $S^{2}$ with the standard metric, and $d_{1}:S^{2}\rightarrow \mathbb{R}$ and $d_{2}:S^{2}\rightarrow \mathbb{R}$ be the distance from the points $p_{1}$ and $p_{2}$ respectively. The function $f:S^{2}\rightarrow \mathbb{R}$ given by $\min\{d_{1}, d_{2}\}$ is not a topological Morse function.
\end{ex}

The sublevel set $f_{\leq t}$ at height $t$ of a piecewise-smooth function $f:M\rightarrow \mathbb{R}$ on a smooth Riemannian manifold $M$ is the set $\{x\in M\ |\ f(x)\leq t\}$. The tangent cone at $x$ to $f_{\leq t}$ is the set of vectors in $T_{x}M$ given by tangent vectors at $x$ to smooth curves $\gamma:(-\epsilon,\epsilon)\rightarrow M$ with $\gamma(s)\in f_{\leq t}$ for $s\geq 0$ and $\gamma(0)=x$. For $x$ on the boundary of a locally convex set, the tangent cone at $x$ to the locally convex set is defined analogously. The set of increase of a Min-type function $f$ is defined to be the closure of the subset of $T_{x}M$ consisting of the directions in which $f$ is increasing.\\

Unlike for smooth Morse functions, which have gradients, in the case of the piecewise-smooth Morse functions from Theorem \ref{thm1}, there can exist regular points at which the set of increase has empty interior, i.e. there are no directions in which the function is increasing to first order.

%%%%%%%%%%%%%%%%%%%%%
\section{Proofs}
\label{secproofs}
The theorems will now be proven.

\begin{thm}[Theorem \ref{thm1} from the Introduction]
Let $\mathcal{F}=\{f_{i}\ |\ i\in \mathbb{N}\}$ be a set of convex functions on a Riemannian manifold $M$, and suppose $f:M\rightarrow \mathbb{R}$ is given by $x\mapsto \min\{f_{i}(x)\ |\ f_{i}\in \mathcal{F}\}$. Suppose also that at every $x\in M$, there is a neighbourhood $N_{x}$ of $x$, on which $f$ is the minimum of a finite subset $\mathcal{F}_{x}$ of $\mathcal{F}$, i.e. $f$ is the minimum of a locally finite set of functions. Then $f$ is a topological Morse function.
\end{thm}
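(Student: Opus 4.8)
The plan is to work locally and reduce everything to a finite minimum, then split into cases according to the configuration of gradients of the functions that are active at a point. By the local finiteness hypothesis, each $x\in M$ has a neighbourhood on which $f=\min\{f_1,\dots,f_k\}$ for finitely many convex $f_i$. Fix $p$ and let $\mathcal{F}_p=\{f_i : f_i(p)=f(p)\}$ be the functions realising the minimum at $p$; by continuity the functions not in $\mathcal{F}_p$ stay strictly larger near $p$, so on a smaller neighbourhood $f=\min\{f_i : f_i\in\mathcal{F}_p\}$. Writing $g_i=\nabla f_i(p)$, the first-order behaviour of $f$ along a direction $v$ is governed by $\min_i\langle g_i,v\rangle$: the directions of strict increase form the open dual cone $\{v:\langle g_i,v\rangle>0 \text{ for all } i\}$, so $f$ is strictly increasing in some direction at $p$ exactly when $0\notin\mathrm{conv}\{g_i\}$.

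First I would treat the regular points, namely those $p$ with $0\notin\mathrm{conv}\{g_i\}$. Here a separating hyperplane gives a vector $w$ with $\langle g_i,w\rangle>0$ for all active $i$, hence (shrinking the neighbourhood and using continuity of the $\nabla f_i$) each $f_i$, and therefore $f$, is strictly increasing along the straight lines in direction $w$ in a chart. Choosing a transverse hypersurface $\Sigma\cong\mathbb{R}^{n-1}$ through $p$ and parametrising the neighbourhood by $\Sigma$ together with the value of $f$ along each line exhibits $f$ as one coordinate of a parametrisation; monotonicity of $f$ on each line plus joint continuity makes this parametrisation a homeomorphism. This is the topological substitute for the implicit function theorem, needed because $f$ is only piecewise smooth.

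Next I would treat the expected critical points, those with $0$ in the relative interior of $\mathrm{conv}\{g_i\}$. Set $V=\mathrm{span}\{g_i\}$ and split $T_pM=V\oplus V^\perp$. On $p+V^\perp$ every $g_i$ is orthogonal to the directions, so each $f_i$ has a strict minimum at $p$ there with positive definite Hessian; hence $f-f(p)\asymp|v|^2$ on $V^\perp$, giving $n-\dim V$ positive directions. On $V$, membership of $0$ in the relative interior forces, for every nonzero $v\in V$, some $g_i$ with $\langle g_i,v\rangle<0$, so $f$ decreases at least linearly, giving $\dim V$ negative directions. A square-root reparametrisation turns the linear decrease $-c|v|$ into a quadratic $-|v|^2$ and absorbs the corners of the piecewise-smooth $f$, and I would then build a homeomorphism to the normal form $\sum_{i\le n-j}x_i^2-\sum_{i>n-j}x_i^2$ with $j=\dim V$, using the strictly convex, sphere-like level sets to straighten the level sets into a product structure over $V\oplus V^\perp$.

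The main obstacle is the degenerate configuration in which $0$ lies in $\mathrm{conv}\{g_i\}$ but \emph{not} in its relative interior. Here the first-order analysis is inconclusive: the set of increase has empty interior, yet $f$ need not be either a clean coordinate or a quadratic saddle, and the local model must be decided from the second-order data. This is exactly the case I expect to be hard, and it is not clear to me that strict convexity alone prevents monkey-saddle-type degeneracies: for instance, three strongly convex functions whose gradients at $p$ point in the directions $(1,0)$, $(-1,0)$, $(0,1)$ produce sublevel sets with three local components just below the critical value but one just above, a degeneracy incompatible with any Morse index. I would therefore expect the proof either to require a genericity or transversality hypothesis ruling out such boundary configurations, or to need a substantially finer argument showing they cannot occur; pinning this down is the crux.
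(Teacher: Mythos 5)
Your case analysis actually matches the paper's: the paper declares $x$ critical of index $r$ exactly when the active gradients span an $r$-dimensional subspace $V$ and every nonzero $v\in V$ is a first-order direction of decrease for some active $f_i$ (equivalently, $0\in\mathrm{relint}\,\mathrm{conv}\{g_i\}$ and $r=\dim V$), and it declares all other points regular. The genuine problem is your final paragraph: the boundary case $0\in\mathrm{conv}\{g_i\}\setminus\mathrm{relint}\,\mathrm{conv}\{g_i\}$ is indeed what your argument leaves open, but your claimed obstruction is false, and your own example is analysed incorrectly. With gradients $g_1=(1,0)$, $g_2=(-1,0)$, $g_3=(0,1)$ and common value $c$ at $p$, the local strict sublevel set is $\{f<c-s\}=\{f_1<c-s\}\cup\{f_2<c-s\}\cup\{f_3<c-s\}$, a union of three convex sets. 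Because each $f_i$ is smooth and strictly convex, $\{f_i<c-s\}$ contains, for small $s$ and $t$, the points $p+tv$ for every direction $v$ with $\langle g_i,v\rangle<0$; hence two active sublevel sets can be locally disjoint at $p$ only if their gradients are anti-parallel. Here $g_3$ is anti-parallel to neither $g_1$ nor $g_2$, so $\{f_3<c-s\}$ meets both $\{f_1<c-s\}$ (in directions with negative $x$ and $y$) and $\{f_2<c-s\}$ (in directions with positive $x$, negative $y$). The sublevel set is therefore connected: one local component just below the critical value and one just above, not three and one. More generally, the local components of $\bigcup_i\{f_i<c\}$ can only split into two clusters separated by a common tangent hyperplane (all gradients collinear, with both signs occurring), which is exactly the index-one picture; monkey saddles cannot occur, and no genericity or transversality hypothesis is needed.

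What actually happens at your configuration is that $p$ is a topologically regular point: the superlevel set is a cusp pinched along the positive $y$-axis (where $f$ increases only to second order), the level set through $p$ consists of two arcs mutually tangent at $p$ and is homeomorphic to a line, and the nearby level sets fit into a product structure. This is precisely the paper's Figure \ref{indexr}(b), and precisely the phenomenon flagged at the end of Section \ref{secdefns}: a regular point of such an $f$ can have a set of increase with empty interior. So the missing step in your proposal is not a finer classification of degeneracies but a proof that every point with $0\notin\mathrm{relint}\,\mathrm{conv}\{g_i\}$ is regular. The paper does this (tersely) by convexity: the intersection of the active sublevel sets at height $f(x)$ is convex near $x$, so its tangent cone at $x$ is a genuine cone; the set of increase of $f$ at $x$ is the reversed cone, possibly with empty interior; and the level set of $f$ through $x$ is then locally the boundary of such a cone, hence homeomorphic to a hyperplane. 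Replacing your last paragraph with an argument of this type, rather than adding hypotheses to exclude the boundary case, is what is required to complete the proof.
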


\begin{proof}
Continuity of $f$ is shown in \cite{Hyam}.\\

The proof consists of showing that all points are either critical or regular according to the definition. Recall that this involves showing that the level sets on a neighbourhood of a critical point of index $r$ are homeomorphic to the level sets on a neighbourhood of a critical point of index $r$ of a smooth Morse function, and the level sets on a neighbourhood of a regular point are homeomorphic to hyperplanes.\\

Suppose $\{f_{1}, \ldots, f_{k}\}\subset \mathcal{F}$ where $f(x)=f_{1}(x)=f_{2}(x)=\ldots=f_{k}(x)$. By local finiteness, there is a neighbourhood of $x$ on which $f$ is obtained as $\min\{f_{1}, \ldots, f_{k}\}$.\\

Local convexity of the functions in the set $\{f_{1}, \ldots, f_{k}\}$ is used here to ensure that, by Theorem 6.2 of \cite{Convex}, the level sets passing through $x$ of each of the functions in the set $\{f_{1}, \ldots, f_{k}\}$ are curved inwards and intersect like spheres. \\

\begin{figure}
\centering
\includegraphics[width=0.4\textwidth]{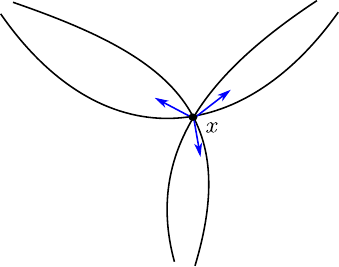}
\caption{The intersection of 3 sublevel sets of convex functions $f_{1}$, $f_{2}$ and $f_{3}$ in $\mathbb{R}^{2}$. The arrows depict the gradients of the functions.}
\label{localmax}
\end{figure}

\begin{figure}
\centering
\includegraphics[width=0.7\textwidth]{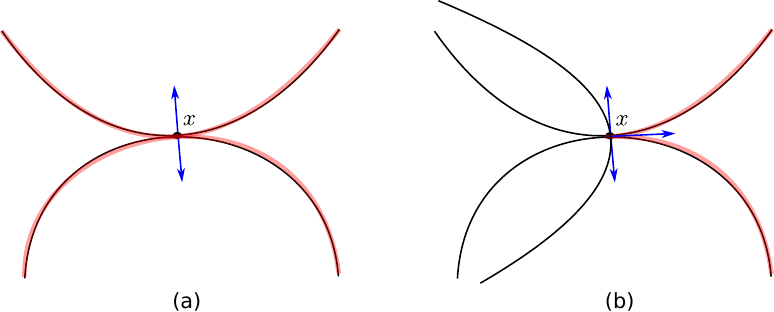}
\caption{Part (a) shows the intersection of sublevel sets of convex functions $f_{1}$ and $f_{2}$ for a critical point $x$ of index 1. Part (b) shows the intersection of sublevel sets of convex functions $f_{1}$, $f_{2}$ and $f_{3}$ in the case of a topologically regular point. The level sets of $f$ through $x$ are shown in red. }
\label{indexr}
\end{figure}

A necessary condition for $x$ to be critical is that there is no direction in which all the functions in $\{f_{1}, \ldots, f_{k}\}$ are increasing to first order at $x$. When $k=1$ and $x$ is the minimum of $f_{1}$, $x$ is a critical point of index 0. The point $x$ is a local maximum if every vector in $T_{x}M$ gives a direction in which at least one of the functions in $\{f_{1}, \ldots, f_{k}\}$ is decreasing. See Figure \ref{localmax}. It is a critical point of index $r$ if $\{\nabla f_{1}(x), \ldots, \nabla f_{k}(x)\}$ spans a subspace $V$ of $T_{x}M$ of dimension $r$, and every vector in $V$ gives a direction in which the length of at least one function in the set $\{f_{1}, \ldots, f_{k}\}$ is decreasing. An example is shown in Figure \ref{indexr} (a). Note that the second condition is essential, because otherwise the point is a topologically regular point, as illustrated in Figure \ref{indexr} (b).\\

It will now be shown that all remaining points are regular points. The key idea here is that the intersection $I$ of the sublevel sets of the functions in $$\{f_{1}, \ldots, f_{k}\}$$ at height $t=f(x)$ is convex on a neighbourhood of $x$. It follows that the tangent cone $C$ to $I$ at $x$ is a cone. This is shown for example at the beginning of Section 4 of \cite{Convex}. Reversing the signs of the vectors in $C$, one obtains the set of increase of $f$ at $x$, which is then also a cone. Note that when this cone has empty interior, the increase of $f$ is to second or higher order as in Figure \ref{indexr} (b). The level set of $f$ through $x$ is therefore locally the boundary of a cone, and hence homeomorphic to a hyperplane as required.
\end{proof}

\begin{thm}[Theorem \ref{thm2} from the Introduction]
Let $\mathcal{F}=\{f_{i}\ |\ i\in \mathbb{N}\}$ be a set of functions on an $n$-dimensional topological manifold $M$. Suppose that for every $x\in M$, $f:M\rightarrow \mathbb{R}$ is given by $x\mapsto \min\{f_{i}(x)\ |\ f_{i}\in \mathcal{F}\}$. Suppose also that for every $x\in M$, there is a neighbourhood $N_{x}$ of $x$, on which $f$ can be evaluated as the minimum of a finite subset $\mathcal{F}_{x}$ of $\mathcal{F}$, where $N_{x}$ is contained in the domain of a chart in which the elements of $\mathcal{F}_{x}$ map to convex functions on an open set of $\mathbb{R}^{n}$. Then $f$ is a topological Morse function.
\end{thm}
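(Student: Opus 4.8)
The plan is to exploit the fact that the definition of a topological Morse function is entirely local and is phrased in terms of homeomorphic parametrizations, so that both regularity and criticality (together with the index) are invariant under precomposition with a homeomorphism. This is precisely the observation highlighted before the statement: the local properties distinguishing regular and critical points are preserved by local homeomorphisms. The strategy is therefore to transport the situation into a chart, apply Theorem \ref{thm1} there, and carry the conclusion back to $M$.

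Fix $x \in M$ and let $\phi \co U \to \phi(U) \subset \mathbb{R}^n$ be a chart whose domain $U$ contains the neighbourhood $N_x$ supplied by the hypothesis. Equip the open set $\phi(N_x) \subset \mathbb{R}^n$ with the standard Euclidean Riemannian metric, and set $g_i := f_i \circ \phi^{-1}$ for each $f_i \in \mathcal{F}_x$. By assumption each $g_i$ is a convex function on $\phi(N_x)$ in the sense of Section \ref{secdefns}, and since on $N_x$ one has $f = \min\{f_i \mid f_i \in \mathcal{F}_x\}$, on $\phi(N_x)$ one has
\[
f \circ \phi^{-1} \;=\; \min\{g_i \mid f_i \in \mathcal{F}_x\}.
\]
As $\mathcal{F}_x$ is finite, this exhibits $f \circ \phi^{-1}$ as the minimum of a (trivially) locally finite family of convex functions on the Riemannian manifold $\phi(N_x)$. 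Theorem \ref{thm1} then applies with $M$ replaced by $\phi(N_x)$ and shows that $f \circ \phi^{-1}$ is a topological Morse function there; in particular the point $\phi(x)$ is either regular or critical, and if critical it has a well-defined index $j$ together with a homeomorphic parametrization $\psi$ of a neighbourhood $V \subset \phi(N_x)$ of $\phi(x)$ realising the appropriate normal form.

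It remains to transfer this back to $x$. Since $f = (f \circ \phi^{-1}) \circ \phi$ and $\phi$ restricts to a homeomorphism from $\phi^{-1}(V)$, a neighbourhood of $x$ in $M$, onto $V$, the composite $\psi \circ \phi$ is a homeomorphic parametrization of $\phi^{-1}(V)$ in which $f$ takes exactly the same normal form that $f \circ \phi^{-1}$ takes under $\psi$. Hence $x$ is regular (resp.\ critical of index $j$) for $f$ precisely when $\phi(x)$ is regular (resp.\ critical of index $j$) for $f \circ \phi^{-1}$. As $x$ was arbitrary, every point of $M$ is regular or critical with respect to $f$, and together with the continuity of $f$ (locally a finite minimum of the continuous functions $f_i$) this shows $f$ is a topological Morse function.

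The bulk of the work has already been done inside Theorem \ref{thm1}, whose proof is a pointwise analysis using only the Riemannian structure and the local convexity of the finitely many functions realising the minimum. Consequently the only genuinely new point to verify is the invariance of the defining normal forms under the chart homeomorphism $\phi$, that is, that a homeomorphic parametrization of a neighbourhood of $\phi(x)$ composes with $\phi$ to give a homeomorphic parametrization of a neighbourhood of $x$ with the same index. I expect this to be the main, though essentially formal, obstacle: one must check carefully that the two convexity notions match, namely that ``$f_i$ maps to a convex function in the chart'' is exactly convexity of $g_i$ with respect to the Euclidean metric placed on $\phi(N_x)$, so that the hypotheses of Theorem \ref{thm1} are genuinely met and Theorem 6.2 of \cite{Convex} may be invoked in the image.
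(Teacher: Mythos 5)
Your proposal is correct and is essentially the paper's own argument made explicit: the paper's (very brief) proof likewise rests on the two observations that the topological Morse condition is chart-local and homeomorphism-invariant, and that in the chart the hypotheses of Theorem \ref{thm1} are satisfied. The only cosmetic difference is that you invoke Theorem \ref{thm1} as a black box on $\phi(N_x)\subset\mathbb{R}^n$ with the Euclidean metric and then transport the normal form back through $\phi$, whereas the paper says the ``same argument'' reruns inside the chart --- a distinction without mathematical content, and your version is if anything the cleaner formalization.
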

\begin{proof}
The definition of topological Morse function is such that it suffices to define the parameterisation in the image of a chart. Moreover, the proof of Theorem \ref{thm1} only required that the functions in $\mathcal{F}(x)$ map to convex functions under the chart in a neighbourhood of $x$. The theorem therefore follows from the same argument as Theorem \ref{thm1}.
\end{proof}

\textbf{Constructing continuous families of topological Morse functions.}
Suppose $f$ is a topological Morse function from Theorem \ref{thm1}. Let $\mathcal{C}$ be an element of $\mathbb{R}^{\mathcal{F}}$ with positive entries close to 1. Every entry of $\mathcal{C}$ corresponds to a function in $\mathcal{F}$. Since a convex function remains convex when multiplied by a positive constant, a new set of convex functions $\mathcal{F}(\mathcal{C})$ is obtained by multiplying every element of $\mathcal{F}$ with the corresponding entry of $\mathcal{C}$.\\

When the entries of $\mathcal{C}$ are sufficiently close to 1, local finiteness from Theorem \ref{thm1} still holds, and $f(\mathcal{C}):=\min\{f(\mathcal{C})_{i}\ |\ f(\mathcal{C})_{i}\in \mathcal{F}(\mathcal{C})\}$ is a topological Morse function. When $f$ is a topological Morse function from Theorem \ref{thm2}, the elements of $\mathcal{C}$ also need to be sufficiently close to 1 to ensure that for every $x\in M$, there remains a chart around $x$ in which the functions $\mathcal{F}_{x}(C) \subset \mathcal{F}(C)$ are locally convex.\\

In conclusion, while topological Morse functions do not appear to be generic, the examples constructed in this paper are also not rigid.

\bibliography{TMFbib}
\bibliographystyle{plain}

\end{document}